\documentclass[preprint]{elsarticle} 


\usepackage{amsmath}
\usepackage{array}
\usepackage{url}
\usepackage{cleveref}
\usepackage{amsfonts}
\usepackage{amsthm}
\usepackage{graphicx}
\usepackage{algorithm} 
\usepackage{algpseudocode}
\usepackage{tikz}
\usepackage{pgfplots}
\usepackage{subcaption}
\usepackage{soul}

\usetikzlibrary{external}
\usetikzlibrary{trees, decorations,positioning,external}
\usetikzlibrary{shapes.misc, fit}
\usetikzlibrary{calc}
\pgfplotsset{compat = newest}


\newcommand*{\vertbar}{\rule[-1ex]{0.5pt}{2.5ex}}

\newcommand{\Pe}{\ensuremath{\mathop{\mathrm{Pe}}}}

\newcommand{%
	\tikzsetnextfilename{}%
	\input{.tikz}%
}[1]{%
	\tikzsetnextfilename{#1}%
	\input{#1.tikz}%
}

\newcommand{%
	\tikzsetnextfilename{Figures/}%
	\input{Figures/.tikz}%
}[1]{%
	\tikzsetnextfilename{Figures/#1}%
	\input{Figures/#1.tikz}%
}

\newcommand{%
	\tikzsetnextfilename{FinalPics/}%
	\input{FinalPics/.tikz}%
}[1]{%
	\tikzsetnextfilename{FinalPics/#1}%
	\input{FinalPics/#1.tikz}%
}

\ifpdf
  \DeclareGraphicsExtensions{.eps,.pdf,.png,.jpg}
\else
  \DeclareGraphicsExtensions{.eps}
\fi


\title{{\TheTitle}\thanks{Submitted to the editors \today.
}}


\usepackage{amsopn}
\usepackage[software,hardware]{mymacros}

\newcommand{\bbm}{\begin{bmatrix}}
	\newcommand{\ebm}{\end{bmatrix}}
\newcommand{\bpm}{\begin{pmatrix}}
	\newcommand{\epm}{\end{pmatrix}}

%

\def\addlegendimage{\csname pgfplots@addlegendimage\endcsname}


\newlength\figureheight
\newlength\figurewidth
\newlength\fheight
\newlength\fwidth

\usepackage{adjustbox}
\usepackage{pdflscape}

\usepackage[ntheorem, fleqn]{empheq}
\usepackage[american]{babel}
\usepackage[title,titletoc,toc]{appendix}

\captionsetup{compatibility=false}
\usepackage{nicefrac}
\newtheorem{theorem}{Theorem}
\newtheorem{example}{Example}
\newtheorem{remark}{Remark}
\newtheorem{assumption}{Assumption}
\usepackage{enumitem}
\renewcommand\hat[1]{\widehat#1}

\makeatletter
\newcommand{\pushright}[1]{\ifmeasuring@#1\else\omit\hfill$\displaystyle#1$\fi\ignorespaces}
\newcommand{\pushleft}[1]{\ifmeasuring@#1\else\omit$\displaystyle#1$\hfill\fi\ignorespaces}
\makeatother

\usepackage{fullpage}
\usepackage[textsize=tiny]{todonotes}


\newcommand{\cstate}{s}      
\newcommand{\dstate}{\mathbf{s}}	
\newcommand{\Mstate}{\mathbf{S}}   

\begin{document}

\begin{frontmatter}
	
	\title{Operator inference for non-intrusive model reduction of systems with non-polynomial nonlinear terms}
	
	
	\author[mpi]{Peter Benner}
	
	\author[mpi]{Pawan Goyal}
		
	\author[ucsd]{Boris Kramer\corref{cor1}}
	\ead{bmkramer@ucsd.edu; +1 858-246-5327}
	
	\author[courant]{Benjamin Peherstorfer}
	
	\author[ut]{Karen Willcox}
	
	\cortext[cor1]{Corresponding author}
	
	\address[mpi] {Max Planck Institute for Dynamics of Complex Technical Systems, Magdeburg, Germany}
		
	\address[ucsd]{Department of Mechanical and Aerospace Engineering, University of California San Diego, CA, United States}
	
	\address[courant]{Courant Institute of Mathematical Sciences, New York University, NY, United States} 
	
	\address[ut]{Oden Institute for Computational Engineering \& Sciences, The University of Texas at Austin, TX, United States}

\begin{abstract}
	This work presents a non-intrusive model reduction method to learn low-dimensional models of dynamical systems with non-polynomial nonlinear terms that are spatially local and that are given in analytic form. In contrast to state-of-the-art model reduction methods that are intrusive and thus require full knowledge of the governing equations and the operators of a full model of the discretized dynamical system, the proposed approach requires only the non-polynomial terms in analytic form and learns the rest of the dynamics from snapshots computed with a potentially black-box full-model solver. The proposed method learns operators for the linear and polynomially nonlinear dynamics via a least-squares problem, where the given non-polynomial terms are incorporated in the right-hand side.  The least-squares problem is linear and thus can be solved efficiently in practice. %
	The proposed method is demonstrated on three problems governed by partial differential equations, namely the diffusion-reaction Chafee-Infante model, a tubular reactor model for reactive flows, and a batch-chromatography model that describes a chemical separation process. The numerical results provide evidence that the proposed approach learns reduced models that achieve comparable accuracy as models constructed with state-of-the-art intrusive model reduction methods that require full knowledge of the governing equations. 
\end{abstract}	
	
	\begin{keyword}
		Model reduction \sep data-driven modeling \sep nonlinear dynamical systems \sep scientific machine learning \sep operator inference
	\end{keyword}
	
\end{frontmatter}

\section{Introduction} \label{sec:intro}

Model reduction constructs computationally efficient reduced models of large-scale dynamical systems by approximating the high-dimensional states in low-dimensional subspaces of the state space. Reduced models are especially beneficial in the many-query and outer-loop setting, such as in optimization, design, uncertainty quantification, and control, where models are evaluated many times for different inputs. An overview of the field of model reduction can be found in, e.g., \cite{BCOW2017morBook,quarteroni2014reduced}.  
Most traditional model reduction techniques are of intrusive nature, which means that full knowledge of the governing equations and the discretization of the full models of the dynamical systems of interest are required. The intrusive nature limits the scope of traditional model reduction methods. For example, consider proprietary software that implements the simulation of a physical process. Details, and access to, the governing equations, discretization, and solver typically are unavailable when working with proprietary software and thus traditional, intrusive model reduction methods are not applicable. 
In contrast, non-intrusive model reduction techniques aim to learn reduced models from snapshots, i.e., either numerical approximations or measurements of the states and the outputs of the dynamical systems. 
There is a variety of non-intrusive model reduction techniques for linear systems, yet methods for learning reduced-order nonlinear systems---which is the focus of this paper---are more scarce.

For linear systems, the Loewner framework constructs reduced models from input-output measurements by fitting a linear-time invariant model to frequency or time-domain data~\cite{morMayA07,antoulas14parametricLoewner,PSW16TLoewner,schulze2018data}. System identification techniques such as the matrix pencil approach~\cite{antoulas12pencilSYSID}, eigensystem realization algorithm~\cite{kung78new,rowley_ma11BTera,KG16TangentialInterpolationERA,KG18ERA_CUR_Hankel}, and vector fitting~\cite{gustavsen99VF,drmac15VF} can accurately approximate dynamical systems and in some cases even retain structure. For nonlinear systems, non-intrusive model reduction generally requires a choice of parametrization of the nonlinear term. For example, the Loewner approach has been extended from linear to bilinear \cite{doi:10.1137/15M1041432} and quadratic-bilinear systems \cite{gosea2018LoewnerQB}. Techniques based on libraries select the nonlinear functions that compose the right-hand-side of a dynamical system from candidate functions that are chosen either by expert knowledge~\cite{brunton2016discovering} or through sparse approximation techniques~\cite{rudy2017data,schaeffer2018extracting}. Dynamic mode decomposition~\cite{rowley09spectralDMD,schmid2010dynamic} learns reduced models for nonlinear dynamical systems, yet its success depends on knowledge of suitable observables that make the input-output map low-dimensional~\cite{williams2015data}. 
A promising approach to consider DMD modes together with the fluctuation-dissipation theorem allows the authors in~\cite{khodkar_hassanzadeh_2018} to learn a linear ROM for nonlinear turbulent flows. 
There are also hybrid approaches to learn only part of the nonlinear system, e.g., in data-driven closure modeling, where a projection-based framework is pursued, but correction terms are learned from data~\cite{xie2018data}, or the Galerkin-ROMs can be re-calibrated \cite{couplet2005calibration,cordier2010calibration} to achieve greater accuracy.
Another approach to learning nonlinear reduced models is operator inference~\cite{Peherstorfer16DataDriven} that is applicable if the nonlinear terms are polynomials in the state.  The operator inference framework has been extended to general nonlinear systems by utilizing variable transformations~\cite{QKMW_2019_Transform_and_Learn}.

In practice, one is often confronted with a mixture of the intrusive and non-intrusive setting, such as when the governing equations of the system of interest are known yet details about the discretization are unavailable. For example, if one considers a reaction-diffusion process (such as the tubular reactor example in \Cref{sec:numerics_tubular}), then the reaction term might be given analytically, whereas the discretization of the terms corresponding to the diffusion are unavailable.
We thus propose a novel non-intrusive model reduction method that incorporates terms that are available analytically and that learns reduced operators for all other terms from snapshot data. 
To this end, we build on the operator inference method and extend it in multiple directions. First, we explicitly incorporate  knowledge about non-polynomial terms that are given analytically. Second, we combine operator inference with empirical interpolation \cite{morBarMNetal04,grepl2007efficient,deim2010} to make the learned models online efficient in case of non-polynomial nonlinear terms. 
Third, we demonstrate that structure in coupled (partial) differential equations can be retained in the learned reduced models, which increases the predictive capabilities of the reduced models.
We show that under certain conditions the learned model converges to the same model that is derived with intrusive model reduction methods that require full knowledge of the governing equations and the full-model solvers. Numerical results demonstrate that our approach learns predictive reduced models for  diffusion-reaction systems and simulations of chemical processes. 

The manuscript is structured as follows. \Cref{sec:problemformulation} describes the problem setting and presents background material on model reduction. \Cref{sec:use_nonlin} presents the proposed operator inference problem for nonlinear non-polynomial systems. \Cref{sec:numerics} presents numerical results that provide evidence of the wide scope of our approach. Conclusions are drawn in \Cref{sec:conclusion}.

\section{Problem setting and preliminaries} \label{sec:problemformulation}
In \Cref{sec:PDE}, we introduce the partial differential equation (PDE) model considered herein, followed by its discretized version in \Cref{sec:ODE}. In \Cref{sec:traditional}, we describe the standard projection-based model reduction technique and in \Cref{sec:problemsetting} we state the problem setting of this paper.

\subsection{Partial differential equation model} \label{sec:PDE}
Let $0 < T \in \mathbb{R}$ and consider a PDE of the form
\begin{equation}\label{eq:PDE}
	\frac{\partial \cstate}{\partial t } = \cA(\cstate) + \cH(\cstate) + f(t,\cstate) + \cB(u)
\end{equation}
for time $t \in (0, T]$, input $u(t)$, state $\cstate(x,t)$ with $x \in \Omega \subseteq \R^d, \ d=1,2,3$, and $\Omega$ a bounded open domain with boundary $\partial \Omega$. We assume boundary conditions $\cstate_{| \partial \Omega}$ and initial conditions $\cstate(x,0)$ are given such that the corresponding initial boundary value problem with governing PDE~\eqref{eq:PDE} is well posed up to time $T$.
In equation~\eqref{eq:PDE}, $\cA$ is a linear operator (satisfying $\cA(a\cstate) = a\cA(\cstate)$ for all $a\in \R$ and $\cA (\cstate_1 + \cstate_2) = \cA(\cstate_1) + \cA(\cstate_2)$, e.g., a difference or integral operator), the operator $\mathcal{H}$ is quadratic in $\cstate$ (satisfying $\cH(a\cstate) = a^2 \cH(\cstate)$ for all $\cstate$, e.g., $\mathcal{H}(\cstate) = \cstate \cdot \nabla \cstate$), the nonlinear function is $f(t,\cstate)$, and $\mathcal{B}$ is a linear input operator.

\subsection{Discretized model} \label{sec:ODE}
A semi-discrete numerical model for the PDE~\eqref{eq:PDE} is given by
\begin{align}\label{eq:FOM}
	\dot \dstate(t)& = \bA\dstate(t) + \bH(\dstate(t)\otimes \dstate(t)) + \boldf(t,\dstate(t)) +  \bB\bu(t),
\end{align}
with finite-dimensional state $\dstate(t)\in \R^n$, input $\bu(t)\in \R^m$, linear operator  $\bA\in \R^{n\times n}$, quadratic operator $\bH \in \R^{n\times n^2}$, input operator $\bB \in \R^{n \times m}$. 
The initial condition for the semi-discrete model~\eqref{eq:FOM} is $\dstate(0) = \dstate_0$ and the input at time $t = 0$ is $\bu(0)=\bu_0$.  Throughout this paper, we call \eqref{eq:FOM} the full-order model (FOM). 
Here, $\otimes$ denotes the column-wise Kronecker product, which for a column vector $\dstate = [\cstate_1, \cstate_2, \ldots, \cstate_n]^\top$ is given by 
$$
\dstate \otimes \dstate = [\cstate_1^2 \ \ \cstate_1 \cstate_2 \ \ldots \cstate_1 \cstate_n \ \ \cstate_2 \cstate_1 \   \ \cstate_2^2 \ \ldots \ \cstate_2\cstate_m \ \ldots \cstate_m^2] \in \R^{n^2},
$$
and for a matrix $\Mstate = [\dstate_1, \dstate_2, \ldots, \dstate_k] \in \R^{n\times k}$ is given by 
$$
\Mstate \otimes \Mstate = [\dstate_1 \otimes \dstate_1 \ \ \dstate_2 \otimes \dstate_2  \ \ \ldots \ \ \dstate_k \otimes \dstate_k ] \in \R^{n^2 \times k}.
$$

In the following, we assume that each component function $f_1, \dots, f_n: [0, T] \times \mathbb{R} \to \mathbb{R}$ of $\boldf$ requires evaluating $f$ from the continuous model \eqref{eq:PDE} at a single component of the state vector $\dstate(t) = [\cstate_1(t), \dots, \cstate_n(t)]^\top$. Formally, this means that $\boldf$ is 
\begin{equation} \label{eq:def_f}
	\boldf(t, \dstate) = \begin{bmatrix}
		f(t, \cstate_1)\\
		\vdots\\
		f(t, \cstate_n)
	\end{bmatrix}\,.
\end{equation}
We introduced the PDE model~\eqref{eq:PDE} and the definition of $\boldf$ in~\eqref{eq:def_f} for a scalar model (describing the evolution of one physical quantity, e.g., velocity) for ease of notation.  For PDEs with $\ell$ physical variables, that means that each component function in~\eqref{eq:def_f} can depend on other physical variables at the same spatial location. 
Next, we give a concrete example of a PDE and its discretization with non-polynomial nonlinear terms as in~\eqref{eq:def_f}.

\begin{example}
	Consider the PDE
	$$
	\frac{\partial}{\partial t} \cstate(x,t) = \frac{\partial^2}{\partial x^2} \cstate(x,t) + e^{-\beta t}  \cstate(x,t)^{-\alpha} + b(x) u(t)
	$$
	with state $\cstate(x,t)$, diffusion operator $\frac{\partial}{\partial x^2}$, time-dependent reaction term $f(t,\cstate) = e^{-\beta t}  \cstate^{-\alpha}$, one-dimensional input $u(t)$, input function $b(x)$ and constants $\alpha, \beta \in \R_+$. Let $\dstate$ denote the discretized, finite-dimensional state vector obtained through a finite difference approximation. A discretization of the diffusion operator results in a matrix representation $\bA$, whose structure depends on the discretization scheme, approximation order, and boundary conditions. Likewise, the discretized version of $b(x)$ is $\bB$. For a vector $\dstate$, we define $\dstate^{-\alpha}:= [s_i]^{-\alpha}, \ i=1,2, \ldots, n$ as the componentwise power of the vector entries. The discretized system reads as 
	$$
	\dot{\dstate}(t) = \bA \dstate(t) + e^{-\beta t} \dstate(t)^{-\alpha} + \bB u(t).
	$$
	Note that the nonlinear function $\boldf(t,\dstate(t)) = e^{-\beta t}  \dstate(t)^{-\alpha}$ does not require approximation of spatial derivatives.  In particular, evaluating the semi-discrete nonlinear function $\boldf(t,\dstate)$ only requires application of $f(t,\cstate)$ at every component of $\dstate$.
\end{example}

In this work, we consider functions $f(t,\cstate)$ that are continuous in $\cstate$ and that are \textit{spatially local}, which means they involve nonlinear expressions in the state $\cstate$; the class of problems not considered in this paper are functions $f$ that include differentials or integrals of $\cstate$.
Other examples of spatially local nonlinear terms are the Arrhenius reaction model $\exp{(\gamma - \frac{\gamma}{\cstate} )}$ in the tubular reactor model in \Cref{sec:numerics_tubular} and the function $\dfrac{\cstate}{\alpha+\cstate}$ in the Batch Chromatography example in \Cref{sec:numerics_batchChrom}; see also \cite{morBarMNetal04,deim2010}.

\subsection{Traditional projection-based model reduction for dynamical systems} \label{sec:traditional}
In projection-based model reduction, the semi-discrete model~\eqref{eq:FOM} is projected onto a low-dimensional subspace. Let $r \ll n$ and let $\bV = [\bv_1, \dots, \bv_{r}] \in \mathbb{R}^{n \times r}$ be an orthonormal matrix, where the columns of $\bV$ span an $r$-dimensional subspace of $\mathbb{R}^{n}$. A basis matrix $\bV$ of such a subspace can be computed with, e.g., proper orthogonal decomposition (POD)~\cite{lumley1967structure,sirovich87turbulence,holmes_lumley_berkooz_1996}.  Let $\Mstate = [\dstate_1, \ldots, \dstate_k]$ be snapshot matrix containing solution snapshots $\dstate_i \approx \dstate(t_i)$, where the $\dstate_i$ are solutions to~\eqref{eq:FOM} computed with a time-stepping scheme. POD computes the basis matrix $\bV$ from $\Mstate$ via singular value decomposition (SVD).

Consider now the reduced operators
\begin{equation}
	\widetilde{\bA} = \bV^\top\bA\bV \in \R^{r \times r}\,,\quad \check{\bH} = \bV^\top \bH (\bV \otimes' \bV) \in \R^{r \times r^2}\,,\quad \widetilde{\bB} = \bV^\top\bB \in \R^{r \times m}\, ,
	\label{eq:Prelim:RedOperators}
\end{equation}
which are the restrictions of the matrices of the FOM from~\eqref{eq:FOM} to the image of $\bV$. 
To remove redundant terms in the original Kronecker product, we use throughout the manuscript the product $\dstate \otimes' \dstate$ that contains as components $\dstate_i \dstate_j, i,j = 1,2, \ldots, n, j\geq i$, which is the Kronecker product yet the duplicate terms are removed. For instance, for $\dstate = [\dstate_1, \dstate_2]^\top$, the standard Kronecker product yields $\dstate\otimes\dstate = [\dstate_1^2 \ \dstate_1 \dstate_2 \ \dstate_2 \dstate_1 \ \dstate_2^2]^\top $ and without redundant terms we have $\dstate\otimes'\dstate = [\dstate_1^2 \ \dstate_1 \dstate_2 \ \dstate_2^2]^\top$.  To this end, we define the matrix $\widetilde{\bH} \in \R^{r\times (r^2+r)/2}$ (without redundant terms) as
$$
\widetilde{\bH} = \check{\bH}_{:,r(i-1) + j}, \qquad i=1, \ldots r, \  i \leq j \leq r,
$$
where $\check{\bH}_{:,j}$ denotes $j$th column of the matrix $\check{\bH}$.
Moreover, we define the function $\widetilde{\boldf}: [0, T] \times \R^r \to \R^r$ as
\[
\widetilde{\boldf}(t, \widetilde{\dstate}(t)) = \bV^\top\boldf(t, \bV\widetilde{\dstate}(t))\,,
\]
where $\widetilde\dstate \in \mathbb{R}^{r}$. Note that there are efficient reduction strategies that approximately compute $\widetilde{\boldf}$ so that $\boldf$ does not have to be evaluated at all $n$ components of $\bV\widetilde{\dstate}$ \cite{astrid2008missing,morBarMNetal04,carlberg2013gnat,deim2010,nguyen2008best}; we will return to such reduction strategies of $\widetilde{\boldf}$ in \Cref{sec:DEIM}. The reduced operators and $\hat{\boldf}$ define the reduced-order model (ROM)
\begin{align}\label{eq:ROM}
	\dot{\widetilde{\dstate}}(t)& = \widetilde \bA\widetilde \dstate(t) + \widetilde \bH(\widetilde \dstate(t) \otimes' \widetilde \dstate(t)) + \widetilde\boldf(t,\widetilde\dstate(t)) +  \widetilde \bB\bu(t)
\end{align}
with the reduced state $\widetilde{\dstate}(t) \in \mathbb{R}^r$ at time $t$ and with initial condition $\widetilde \dstate(0) = \widetilde\dstate_0$. 
Constructing reduced operators \eqref{eq:Prelim:RedOperators} via projection requires that the operators $\bA, \bB, \bH$ of the system \eqref{eq:FOM} are available in assembled form or as matrix-vector products. Furthermore, each evaluation of $\widetilde{\boldf}$ requires evaluating components of $\boldf$.
%

\subsection{Problem setting} \label{sec:problemsetting}
In this work, we consider the situation that we have a PDE model~\eqref{eq:PDE} with spatially local nonlinear term $f$ as described in \Cref{sec:PDE}, and that we can obtain snapshot data $\dstate(t_i),\bu(t_i)$ of the model~\eqref{eq:FOM} at times $0=t_0< t_1 <\cdots < t_k$. 
However, the discretized operators $\bA, \bH, \bB$ are unavailable. 
The goal of this work is to learn a nonlinear ROM of the form~\eqref{eq:ROM}.

\section{Operator inference for nonlinear systems} \label{sec:use_nonlin}
In \Cref{sec:OpInf_knownF}, we extend the operator inference framework for low-order polynomial systems from~\cite{Peherstorfer16DataDriven} to the setting described in \Cref{sec:PDE} with general non-polynomial nonlinear function $f$. In \Cref{sec:OPINF_analysis}, we present a result that shows that under certain conditions, the learned operators converge to their projection-based counterparts. In \Cref{sec:DEIM}, we discuss empirical interpolation \cite{morBarMNetal04,deim2010} to efficiently reduce the non-polynomial nonlinear terms in the learned ROMs.  

\subsection{Operator inference method} \label{sec:OpInf_knownF}
We now extend the operator inference method introduced in~\cite{Peherstorfer16DataDriven} for polynomially nonlinear systems to systems with non-polynomial nonlinear terms that are spatially local. Let $\dstate_1, \ldots, \dstate_k$ be the solutions of the FOM~\eqref{eq:FOM} at time steps $t_1, \ldots, t_k$ computed with a time stepping scheme and initial condition $\dstate_0$.  Let the inputs $\bu(t_0), \ldots, \bu(t_k)$ be sampled at the same time steps. Then, the snapshot and input trajectory are defined as
\begin{equation} \label{eq:Xdot_X}
	{\Mstate}  : = 
	\left[
	\begin{array}{llll}
		~ \vertbar & ~\vertbar &        & ~\vertbar \\
		\dstate_0    & \dstate_1    & \cdots & \dstate_k   \\
		~\vertbar & ~\vertbar &        & ~\vertbar 
	\end{array}
	\right], \qquad 
	{\bU}  : = 
	\left[
	\begin{array}{llll}
		~ \vertbar & ~\vertbar &        & ~\vertbar \\
		\bu({t_0})    & \bu({t_1})    & \cdots & \bu({t_k})    \\
		~\vertbar & ~\vertbar &        & ~\vertbar 
	\end{array}
	\right].
\end{equation}
Note that we can evaluate $\boldf(t_i,\dstate(t_i))$ using the solution $\dstate(t)$ and the analytical form of the spatially local nonlinear term $f$. That is, we can post-process the snapshots to obtain
\begin{equation}\label{eq:F}
	{\bF}  = 
	\left[
	\begin{array}{cccc}
		\vertbar & \vertbar &        & \vertbar \\
		\boldf(t_0,\dstate(t_0))    & \boldf(t_1,\dstate(t_1))    & \cdots & \boldf(t_k,\dstate(t_k))    \\
		\vertbar & \vertbar &        & \vertbar 
	\end{array}
	\right].
\end{equation}
In the operator inference framework, the goal is to learn a ROM of the form~\eqref{eq:ROM} directly from data. One starts with the projected  trajectories of $\dstate(t)$, $\dot \dstate(t)$ and $\boldf(t,\dstate)$, which are obtained via the following projections onto the basis matrix $\bV\in \R^{n\times r}$:
\begin{equation} \label{eq:Shat}
	\hat{\Mstate} = \bV^\top{\Mstate},\qquad 
	{\hat{\bF}} = \bV^\top {\bF}.
\end{equation}
We denote with $\dot{\widehat{\dstate}}_k$ the time derivative approximation of $\frac{\text{d}}{\text{d}t} \hat{\dstate}(t_k)$, which can be computed from $\hat{\dstate}$ using a time derivative approximation (see, e.g.,~\cite{martins2013review,knowles2014methodsDifferentiation,chartrand2017numericalDifferentiation}). We store the time-derivative approximations in the matrix
\begin{equation} \label{Xdot}
	\dot{\widehat{\Mstate} } : = 
	\left[
	\begin{array}{llll}
		~ \vertbar & ~\vertbar &        & ~\vertbar \\
		\dot{\widehat{\dstate}}({t_0})    & \dot{\widehat{\dstate}}({t_1})    & \cdots & \dot{\widehat{\dstate}}({t_k})    \\
		~\vertbar & ~\vertbar &        & ~\vertbar 
	\end{array}
	\right].
\end{equation}
The optimization problem to compute $\hat \bA$, $\hat \bB$ and $\hat \bH$ from the above projected data is:
\begin{equation} \label{eqn:optimizationProb}
	\min_{\hat \bA,\hat \bB,\hat \bH} \  \| \underbrace{\dot{\hat{\Mstate}} - \hat{\bF}}_{:=\hat{\bR}}- \hat \bA \hat{\Mstate} - \hat \bB \bU -\hat \bH (\hat{\Mstate}\otimes' \hat{\Mstate})\|_F
\end{equation}
where $\hat{\bR}$ contains only known terms. 
%
%
Taken together, the solution of the operator inference problem for $\hat \bA,\hat \bB,\hat \bH$ yields a learned ROM of the form 
\begin{align}  \label{eq:ROM_OPINF}
	\dot{\hat{\dstate}}(t)& = \hat \bA\hat \dstate(t) + \hat \bH(\hat \dstate (t )\otimes' \hat \dstate (t)) + \bV^\top \boldf(t,\bV\hat \dstate(t)) +  \hat \bB\bu(t), 
\end{align} 
which can then be used for predictive simulations. \Cref{algo:OpInfNL} summarizes the algorithmic steps of this section. 
\begin{algorithm}[!tb]
	\caption{Operator inference for nonlinear systems.}
	\begin{algorithmic}[1]
		\State {\bf Input:} $\dstate_0,\Mstate,\bU,\bF$ and a user-specified tolerance $\texttt{tol}$.
		\State Collect derivative data $\dot{\Mstate}$ by numerical approximation or evaluation of a right-hand side residual on the snapshot matrix $\Mstate$. 
		\State Compute the $r$  dominant POD basis vectors of $\Mstate$, resulting in $\bV$ such that 
		$$\dfrac{\|\Mstate - \bV\bV^\top\Mstate\|}{\|\Mstate\|} \leq \texttt{tol}.$$
		\State Determine projected initial condition $\hat{\dstate}_0$, reduced states $\hat{\Mstate}$, reduced derivative $\dot{\hat{\Mstate}}$, and reduced nonlinear terms $\hat{\bF}$:
		$$\hat{\dstate}_0 := \bV^\top\dstate_0, \quad\hat{\Mstate} := \bV^\top\Mstate,\quad \dot{\hat{\Mstate}} = \bV^\top\dot{{\Mstate}},\quad  \hat{\bF} := \bV^\top\bF.$$
		\State Solve the optimization problem \eqref{eqn:optimizationProb}, yielding $\hat{\bA}$, $\hat{\bB}$, $\hat{\bH}_\text{sq}$. 
		\State  {\bf Output:} $\hat{\dstate}_0$, $\hat{\bA}$, $\hat{\bB}$, $\hat{\bH}$.
	\end{algorithmic}\label{algo:OpInfNL}
\end{algorithm}
In \Cref{fig:compare_PODDEIM_NonIntru}, we illustrate the differences of the  proposed approach of using operator inference together with knowledge of spatially nonlinear functions to learn ROMs as compared to an intrusive projection-based ROM. 
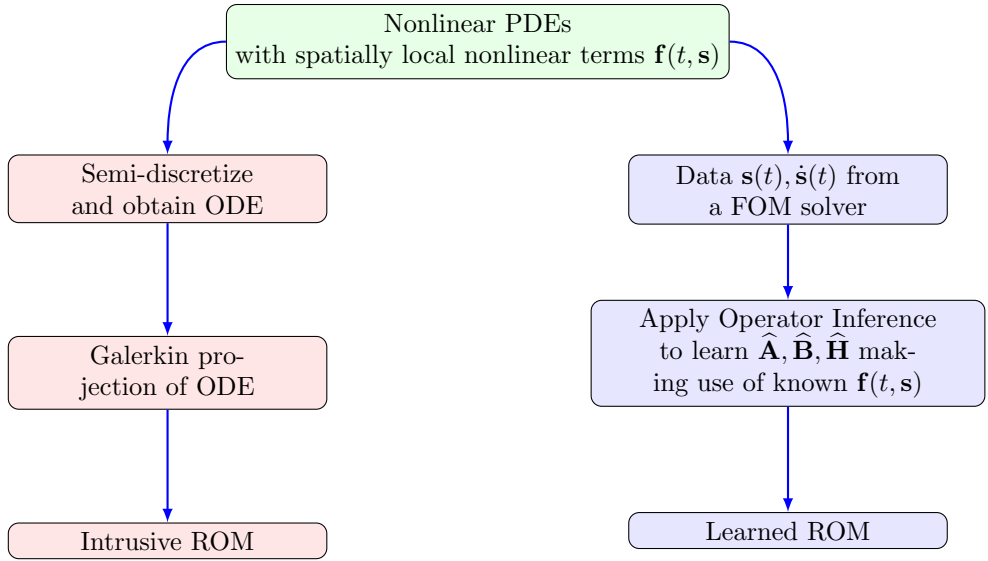
\begin{figure}[!tb]
	\centering
	\tikzsetnextfilename{FlowChart}%
	\begin{tikzpicture}
  \node (NL_PDE) [rectangle,rounded corners,draw,align = center, fill = green!10] at (0,0) {Nonlinear PDEs \\ with spatially local nonlinear terms $\boldf(t,\bs)$};
  \node (Discrete) [rectangle,rounded corners,draw, below left = 1cm and 2cm, text width = 4cm,align=center,fill = red!10] at (NL_PDE.south) {Semi-discretize and obtain ODE};
  \node (PODEIM) [rectangle,rounded corners,draw, below = 1.5cm, text width = 4cm,align=center,fill = red!10] at (Discrete.south) {Galerkin projection of ODE};
  \node (ROM1) [rectangle,rounded corners,draw, below = 1.5cm, text width = 4cm,align=center,fill = red!10] at (PODEIM.south) {Intrusive ROM};
  \node (Data) [rectangle,rounded corners,draw, below right = 1cm and 2cm, text width = 4cm,align=center,fill = blue!10] at (NL_PDE.south) {Data ${\bs}(t), \dot{\bs}(t)$ from a FOM solver};
  \node (NonIntru) [rectangle,rounded corners,draw, below  = 1cm, text width = 5cm,align=center,fill = blue!10] at (Data.south) {Apply Operator Inference to learn $\hat \bA, \hat \bB, \hat \bH$ making use of known $\boldf(t,\bs)$};
  \node (ROM2) [rectangle,rounded corners,draw, below = 1.4cm, text width = 4cm,align=center,fill = blue!10] at (NonIntru.south) {Learned ROM};
  \draw[blue,thick,-latex] (NL_PDE.west) to[out=180,in=90] (Discrete.north);
  \draw[blue,thick,-latex] (Discrete.south) -- (PODEIM.north);
  \draw[blue,thick,-latex] (PODEIM.south) -- (ROM1.north);
  \draw[blue,thick,-latex] (NL_PDE.east) to[out=0,in=90] (Data.north);
  \draw[blue,thick,-latex] (Data.south) -- (NonIntru.north);
  \draw[blue,thick,-latex] (NonIntru.south) -- (ROM2.north);
\end{tikzpicture}%

	\caption{A comparison of a fully intrusive ROM approach with the proposed learning approach.}
	\label{fig:compare_PODDEIM_NonIntru}
\end{figure}

\begin{remark}
	We introduced the operator inference framework in the context of continuous-time nonlinear systems. However, the framework carries over directly to discrete time systems of the form 
	\begin{equation}
		\dstate_{k+1} = \bA \dstate_k + \bH (\dstate_k \otimes' \dstate_k) + \boldf(t_k, \dstate_k) + \bB \bu_k.
	\end{equation}	
	In this case, we replace $\Mstate$ with $\Mstate_0 = [\dstate_0, \dstate_1, \ldots, \dstate_{k-1}]$, set $\bU = [\bu_0, \bu_1, \ldots, \bu_{k-1}]$, and replace the time-derivative matrix $\dot{\Mstate}$ with $\Mstate_1 = [\dstate_1, \dstate_2, \ldots, \dstate_k]$.
\end{remark}

\subsection{Analysis of operator inference framework} \label{sec:OPINF_analysis}
We extend the results from \cite[Sec~3.2]{Peherstorfer16DataDriven} to show that under certain conditions on the time discretization, the learned operators converge to the intrusively obtained reduced operators. For this, we make two assumptions on the time discretization of the FOM and ROM.

\begin{assumption} \label{ass1}
	The time stepping scheme for the FOM~\eqref{eq:FOM} is convergent, i.e., 
	$$
	\max_{i \in \{ 1,\ldots T/\Delta t \}} \Vert \dstate_i - \dstate(t_i) \Vert_2 \rightarrow 0 \quad \text{as} \quad \Delta t\rightarrow 0.
	$$ 
\end{assumption}

\begin{assumption} \label{ass2} 
	The derivatives approximated from projected states, $\dot{\hat{\dstate}}_k$, converge to $\frac{\text{d}}{\text{d}t} \hat{\dstate}(t_k)$ as the discretization time step $\Delta t \rightarrow 0$, i.e., 
	$$
	\max_{i \in \{ 1,\ldots T/\Delta t \}} \Vert \dot{\hat{\dstate}}_i - \frac{\text{d}}{\text{d}t} \hat{\dstate}(t_i) \Vert_2 \rightarrow 0 \quad \text{as} \quad \Delta t\rightarrow 0.
	$$ 
\end{assumption}
Assumption~\ref{ass2} relates to the specific derivative approximation scheme that is used, its approximation order and the time-step size. In this work, we employ a fourth-order time-stepping scheme, see \Cref{sec:numerics} for details. Other choices of time discretizations are possible, see e.g.,~\cite{martins2013review,knowles2014methodsDifferentiation,chartrand2017numericalDifferentiation} and the references therein.

\begin{theorem}
	Let Assumptions~\ref{ass1}--\ref{ass2} hold and let a basis matrix $\bV = [ \bv_1, \bv_2, \ldots, \bv_r] \in \R^{n\times r}$ be given. Let $\widetilde{\bA}, \widetilde{\bB}, \widetilde{\bH}$ be the intrusively projected ROM operators from \eqref{eq:Prelim:RedOperators}. If the data matrix $\hat{\bD} = [\hat{\Mstate}, \ \bU, \ \hat{\Mstate} \otimes' \hat{\Mstate}]$ has full column rank, then for every $\varepsilon>0$, there exists $r\leq n$ and a time step size $\Delta t>0$ such that for the difference between the learned operators $\hat{\bA}, \hat{\bH}, \hat{\bB}$ and the (intrusive) projection-based $\widetilde{\bA}, \widetilde{\bH}, \widetilde{\bB}$, we have
	
	\begin{equation*}
		\Vert \hat{\bA} - \widetilde{\bA} \Vert_F \leq \varepsilon, \quad \Vert \hat{\bB} - \widetilde{\bB} \Vert_F \leq \varepsilon, \qquad \Vert \hat{\bH} - \widetilde{\bH} \Vert_F \leq \varepsilon.
	\end{equation*}
\end{theorem}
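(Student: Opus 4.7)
The strategy is to treat the non-polynomial snapshots $\hat{\bF}$ as a known quantity absorbed into the right-hand side, and then adapt the convergence argument of \cite[Sec.~3.2]{Peherstorfer16DataDriven}. First, I would rewrite the optimization problem \eqref{eqn:optimizationProb} in the compact form $\min_{\hat{\bO}}\|\hat{\bR}-\hat{\bO}\hat{\bD}^\top\|_F$, where $\hat{\bO}:=[\hat{\bA},\hat{\bB},\hat{\bH}]$ stacks the unknowns side by side and $\hat{\bD}$ is the design matrix formed by the blocks $\hat{\Mstate}$, $\bU$, and $\hat{\Mstate}\otimes'\hat{\Mstate}$. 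The full column-rank hypothesis forces $\hat{\bD}^\top\hat{\bD}$ to be invertible, so the unique minimizer $\hat{\bO}=\hat{\bR}\hat{\bD}(\hat{\bD}^\top\hat{\bD})^{-1}$ depends Lipschitz continuously on $\hat{\bR}$ with constant $1/\sigma_{\min}(\hat{\bD})$.

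Next, I would certify that the intrusive operators $\widetilde{\bO}:=[\widetilde{\bA},\widetilde{\bB},\widetilde{\bH}]$ approximately satisfy the same linear relation. Let $\Mstate^\star := [\dstate(t_0),\ldots,\dstate(t_k)]$ denote the exact continuous trajectory and $\bF^\star$ the exact evaluations of $\boldf$ along it. Left-multiplying \eqref{eq:FOM} by $\bV^\top$ at each sampling time and substituting $\dstate(t_i) = \bV\bV^\top\dstate(t_i) + (I-\bV\bV^\top)\dstate(t_i)$ yields
\begin{equation*}
\bV^\top\dot{\dstate}(t_i) = \widetilde{\bA}(\bV^\top\dstate(t_i)) + \widetilde{\bH}\bigl((\bV^\top\dstate(t_i))\otimes'(\bV^\top\dstate(t_i))\bigr) + \widetilde{\bB}\bu(t_i) + \bV^\top\boldf(t_i,\dstate(t_i)) + \eta_i,
\end{equation*}
where the remainder $\eta_i$ can be bounded by a polynomial in $\|(I-\bV\bV^\top)\dstate(t_i)\|$ and $\|\dstate(t_i)\|$. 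Assembling these identities over $i=0,\ldots,k$ shows that $\widetilde{\bO}$ exactly solves the same least-squares problem against an idealized right-hand side $\bV^\top\dot{\Mstate}^\star - \bV^\top\bF^\star$, with residual controlled by the POD truncation error $\|(I-\bV\bV^\top)\Mstate^\star\|_F$.

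I would then close the argument by showing that the actual right-hand side $\hat{\bR}=\dot{\hat{\Mstate}}-\hat{\bF}$ is close to this idealized one. Three independent errors appear: the FOM integration error $\|\Mstate-\Mstate^\star\|_F$, which vanishes by \Cref{ass1} and transfers to $\|\bF-\bF^\star\|_F$ by continuity of $\boldf$; the derivative-approximation error $\|\dot{\hat{\Mstate}}-\bV^\top\dot{\Mstate}^\star\|_F$, which vanishes by \Cref{ass2}; and the POD truncation error, which can be made arbitrarily small by enlarging $r$ with $\bV$ taken to contain the leading POD modes. Choosing $r$ first and then $\Delta t$ sufficiently small makes $\|\hat{\bR}-\widetilde{\bO}\hat{\bD}^\top\|_F$ arbitrarily small, and the Lipschitz estimate from the first step upgrades this to $\|\hat{\bO}-\widetilde{\bO}\|_F\le\varepsilon$. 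Partitioning the columns of $\hat{\bO}-\widetilde{\bO}$ then delivers the three separate bounds stated in the theorem.

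The main obstacle I anticipate is the algebraic manipulation of the quadratic term, specifically expressing $\bV^\top\bH(\dstate\otimes\dstate)$ in terms of the de-duplicated Kronecker product $\otimes'$ and the intrusive operator $\widetilde{\bH}$, while carefully tracking the three cross-terms that arise when $\dstate$ is split into its in-subspace and orthogonal components, and bounding their Frobenius norms by the POD remainder. A secondary subtlety is controlling $1/\sigma_{\min}(\hat{\bD})$ in the chosen limit; denser snapshots and larger $r$ generally improve conditioning, but one should argue that the Lipschitz constant grows strictly slower than the residual shrinks in the joint limit as $\Delta t \to 0$ and $r$ increases, so that the final $\varepsilon$ estimate is uniform.
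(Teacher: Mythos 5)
Your proposal is correct in substance, but it closes the argument along a genuinely different route than the paper. The paper compares the projected FOM data against a second, auxiliary trajectory obtained by simulating the intrusive ROM~\eqref{eq:ROM}: it forms $\widetilde{\bD},\widetilde{\bR}$ from that ROM trajectory, notes that $\widetilde{\bO}=[\widetilde{\bA},\widetilde{\bB},\widetilde{\bH}]$ is the (unique, under full rank) minimizer for that data, and then argues that $\hat{\bD}\to\widetilde{\bD}$ and $\hat{\bR}\to\widetilde{\bR}$ as $r\to n$ and $\Delta t\to 0$, exchanging the limit with the minimization. You instead keep the actual design matrix $\hat{\bD}$ fixed and show directly that $\widetilde{\bO}$ has a small \emph{defect} in the actual regression problem, by projecting the FOM equation~\eqref{eq:FOM} at each sample time, splitting $\dstate$ into $\bV\bV^\top\dstate$ and its orthogonal complement, and collecting the cross-terms into a remainder controlled by the POD truncation error; the bound $\|\hat{\bO}-\widetilde{\bO}\|_F\le\|\hat{\bR}^\top-\hat{\bD}\widetilde{\bO}^\top\|_F/\sigma_{\min}(\hat{\bD})$ then finishes the argument. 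Your version buys two things: it avoids ever simulating the intrusive ROM (and hence any well-posedness/stability question for that auxiliary trajectory), and it perturbs only the right-hand side rather than both the design matrix and the right-hand side, so the least-squares perturbation analysis is a clean normal-equations estimate rather than an informal limit exchange. The paper's version buys brevity and stays closer to \cite[Theorem~1]{Peherstorfer16DataDriven}. Two cautions: first, you must carry out the $\otimes'$ bookkeeping for the quadratic cross-terms that you flag as the main obstacle --- it is routine but necessary for the defect bound to be explicit; second, your worry about $1/\sigma_{\min}(\hat{\bD})$ degenerating in the joint limit is legitimate and is \emph{not} resolved by the paper either (the theorem simply assumes full column rank of $\hat{\bD}$ at the chosen $r$ and $\Delta t$), so the honest statement in both proofs is that $\sigma_{\min}(\hat{\bD})$ is bounded away from zero along the chosen sequence, which is an assumption rather than a conclusion.
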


\begin{proof}
	The proof follows the ideas of the proof of \cite[Theorem~1]{Peherstorfer16DataDriven}. With the data matrices from \eqref{eq:Shat}--\eqref{Xdot}, we set $\hat{\bD} = [\widehat{\Mstate}, \ \bU, \ \widehat{\Mstate} \otimes' \widehat{\Mstate}]$ as the data matrix, $\widehat{\bR} = \dot{\widehat{\bS}} - \widehat{\bF}$ as the residual, and $\hat{\bO} = [ \widehat{\bA}, \widehat{\bB}, \widehat{\bH}]$ are the operators to be inferred, so the operator inference problem~\eqref{eqn:optimizationProb} can be formulated as
	\begin{equation} \label{eqn:optimizationProbDO}
		\min_{\hat \bO} \  \| \widehat{\bD} \widehat{\bO}^\top - \widehat{\bR}^\top \|_F.
	\end{equation} 	
	Denote by $\widetilde{\Mstate} = [ \widetilde{\dstate}_1,\widetilde{\dstate}_2, \ldots, \widetilde{\dstate}_k  ]$ the data matrix that is collected from taking $k$ snapshots of the intrusive ROM~\eqref{eq:ROM}, and let $\widetilde{\bD} = [\widetilde{\Mstate}, \ \bU, \ \widetilde{\Mstate} \otimes' \widetilde{\Mstate}]$ and $\widetilde{\bR} = \dot{\widetilde{\bS}} - \widetilde{\bF}$. With $\widetilde{\bD},\widetilde{\bR}$ as data, the projected operators $\widetilde{\bA}, \widetilde{\bB}, \widetilde{\bH}$ are a solution of the minimization problem~\eqref{eqn:optimizationProbDO}, and if $\widetilde{\bD}$ has full column rank, then the solution is also unique. 
	Next, we observe that $\widehat{\bS} = \widetilde{\bS} + \Delta\widetilde{\bS}$, i.e., the projected FOM data can be interpreted as a perturbation of the ROM-simulated data, and as the dimension $r$ increases, we have $\Vert \Delta \widetilde{\bS}  \Vert_F \rightarrow 0$ because of Assumption~\ref{ass1}, and the fact that for the limiting case $r=n$ the FOM~\eqref{eq:FOM} and ROM~\eqref{eq:ROM} are equivalent up to a change of coordinates. 
	Since $\widehat{\bS} \rightarrow \widetilde{\bS}$ we also have by the continuity of $\boldf_r$ that $\widetilde{\bF} \rightarrow \widehat{\bF}$.
	Moreover, Assumption~\ref{ass2} ensures convergence of $\dot{\hat{\dstate}}_k \rightarrow \frac{\text{d}}{\text{d}t} \widehat{\dstate}(t_k)$ and using again that for the limiting case $r=n$ the FOM~\eqref{eq:FOM} and ROM~\eqref{eq:ROM} are equivalent up to a change of coordinates, we have that $\frac{\text{d}}{\text{d}t} \widehat{\dstate}(t_k) \rightarrow \frac{\text{d}}{\text{d}t} \widetilde{\dstate}(t_k)$ as $r\rightarrow n$ so $\Vert \Delta \widetilde{\bR} \Vert_F \rightarrow 0$.  Taken together we have $\Vert \widetilde{\bR} + \Delta \widetilde{\bR} \Vert_F \rightarrow \Vert \widetilde{\bR} \Vert_F$. 
	Therefore, we have as a limiting result that
	\begin{equation}
		\min_{\widehat \bO} \left [ \lim_{\substack{r\rightarrow n  \\ \Delta t \rightarrow 0}}    \  \| \widehat{\bD} \widehat{\bO}^\top - \widehat{\bR}^\top \|_F \right ]
		= 
		\min_{\widehat \bO} \left [ \lim_{\substack{r\rightarrow n  \\ \Delta t \rightarrow 0}}    \  \| [\widetilde{\bD} + \Delta \widetilde{\bD} ]  \widehat{\bO}^\top - [\widetilde{\bR} + \Delta \widetilde{\bR}]^\top \|_F \right ]
		= 
		\min_{\widehat \bO}  \| \widetilde{\bD} \widehat{\bO}^\top - \widetilde{\bR}^\top \|_F, 
	\end{equation} 	
	so the learned operators $\widehat{\bO}$ converge to the intrusively projected operators $\widetilde{\bO}$. In the pre-asymptotic case, we then get the stated result in the theorem, which also uses the full-rank assumption on $\widehat{\bD}$ to deduce $\Vert \hat{\bA} - \widetilde{\bA} \Vert_F \leq \varepsilon, \ \Vert \hat{\bB} - \widetilde{\bB} \Vert_F \leq \varepsilon, \ \Vert \hat{\bH} - \widetilde{\bH} \Vert_F \leq \varepsilon$. 
\end{proof}	

The theorem shows that the learned ROM operators converge to the projected ROM operators as $r\rightarrow n$, in the coordinate system spanned by the columns of $\bV$ as $\widehat{\bA} \rightarrow \bV^\top \bA \bV$. However, we do not obtain a convergence rate, and in our practical experience there are examples where the difference in operators might not monotonically decrease for low orders of the ROM, such as the tubular reactor example in Section~\ref{sec:numerics_tubular}. Even for $r=50$ the relative error in the inferred linear system operators is $1.66\times 10^{-2}$ for the tubular reactor example. For the other examples in Sections~\ref{sec:numerics_Chaffee} and \ref{sec:numerics_batchChrom}, the norm did decrease for low orders of the ROM. Most importantly, however, we see in the numerical results in Section~\ref{sec:numerics} that in all cases we can obtain accurate learned ROMs with convergent state-errors for $r\ll n$ via the operator inference framework.

The cost of the proposed framework of operator inference for nonlinear systems is similar to a standard projection-based POD framework when $k<n$, i.e., we have more states than snapshots. Both the projection-based ROM and the learned ROM require computing the POD basis from the snapshot set as well as time integration of $r$-dimensional ROMs. The methods differ in that the operator inference requires projecting the data $\bS, \dot{\bS}$ onto the subspace $\bV$ (see step~4 of Algorithm~\ref{algo:OpInfNL}), forming $\bS\otimes'\bS$ and then solving a least-squares problem in reduced dimension. The cost scales as $\mathcal{O}(rn)$. In contrast, the projection-based ROM requires projecting the model matrices onto the subspace $\bV$, see \eqref{eq:Prelim:RedOperators}, which also scales as $\mathcal{O}(rn)$. Note, that in that last step we need to form $\bV^\top \bH (\bV\otimes' \bV)$ which is potentially expensive.

\subsection{Empirical interpolation} \label{sec:DEIM}
In this section, we discuss empirical interpolation \cite{morBarMNetal04,deim2010} to accelerate the evaluation of $\bV^\top \boldf (t, \bV \hat{\dstate})$ in the learned ROM. We employ the discrete empirical interpolation method (DEIM) from \cite{deim2010}. To this end, we approximate 
\begin{equation} \label{eq:DEIMapprox}
	\hat{\boldf}(t,\bV \hat{\dstate}(t)) \approx \hat{\boldf}_r (t,\bV \hat{\dstate}(t)) = \bV^\top \bW (\mathbb{S}^\top \bW)^{-1} \mathbb{S}^\top \boldf(t,\bV \hat{\dstate}(t)).
\end{equation}
The matrix $\bW$ is computed by taking the SVD of the nonlinear snapshot matrix $\bF$ in equation~\eqref{eq:F}, and setting $\bW$ to the leading $m$ left singular vectors of $\bF$. Here, $\mathbb{S}$ is an $n\times m$ matrix obtained by selecting certain columns of the $n\times n$ identity matrix. The choice of the selected entries in $\mathbb{S}$ follows \Cref{algo:DEIM}, which enforces interpolation $\mathbb{S}^\top \boldf(t,\cdot)  = \mathbb{S}^\top [\bW (\mathbb{S}^\top \bW)^{-1} \mathbb{S}^\top \boldf(t,\cdot) ]$ and simultaneously limits the local growth of the spectral norm $\Vert(\mathbb{S}^\top \bW)^{-1}\Vert_2$, as this term occurs in the DEIM error bound \cite{deim2010}.
\begin{algorithm}[!tb]
	\caption{Discrete empirical interpolation method \cite[Algorithm 1]{deim2010}}
	\begin{algorithmic}[1]
		\State {\bf Input:} $\bW = [\bw_1, \ldots, \bw_m]$.
		\State $p_1 = \arg \max_i( | \bw_1(i) | ); \ \bW_1 = [\bw_1]; \ \mathbb{S}_1 = [e_{p_1}]; \ \rho_1= [p_1]$
		\For{$j=2:m$}
		\State Solve $\mathbb{S}^\top_{j-1} \bW_{j-1} z = \mathbb{S}^\top _{j-1} \bw_j$ for $z$;
		\State  $r_j = \bw_j - \bW_{j-1} z;  \ p_j = \arg \max_i ( | r_j (i)| );$
		\State $U_j = [\bW_{j-1}, \bw_{j}]; \ \mathbb{S}_j = [\mathbb{S}_{j-1}, e_{p_j}]; \ \rho_j = (\rho_{j-1}, p_j)$;
		\EndFor
		\State  {\bf Output:}  Selection operator $\mathbb{S} = \mathbb{S}_m$
	\end{algorithmic}\label{algo:DEIM}
\end{algorithm}
The learned ROM with DEIM interpolation is written as
\begin{align}\label{eq:ROMDEIM}
	\dot{\hat{\dstate}}(t)& = \hat \bA\hat \dstate(t) + \hat \bH(\hat \dstate(t) \otimes' \hat \dstate(t)) + \hat \boldf_r(t,\hat \dstate(t)) +  \hat \bB\bu(t), 
\end{align}
where only $m\ll n$ entries of $\boldf$ are evaluated, thus achieving computational speedup.

\section{Numerical results} \label{sec:numerics}
In this section, we test the efficiency of the proposed operator inference approach by means of several problems, arising in different areas of science and engineering. We compare the quality of the learned ROM with an intrusive ROM. Here, we use a projection-based POD method. Below we give some information that applies to all our numerical simulations:
\begin{itemize}
\item All models are simulated using the routine \texttt{ode15s} in \matlab ~with relative error and absolute error tolerances of $10^{-10}$.
\item In the operator inference framework, we approximate the time derivative $\dot{\widehat{\dstate}}_i$ with a five-point stencil $\dot{\widehat{\dstate}}_i \approx (-{\widehat{\dstate}}_{i+2} + 8 {\widehat{\dstate}}_{i+1} - 8{\widehat{\dstate}}_{i-1} +{\widehat{\dstate}}_{i-2} )/(12\Delta t)$, which has fourth-order accuracy. The first two and last two time derivatives are computed using first-order forward and backward Euler approximations, respectively.
\end{itemize}

We present three examples with increasing complexity to illustrate our approach.

\subsection{Chafee-Infante model} \label{sec:numerics_Chaffee}
We consider the one-dimensional Chafee-Infante equation \cite{chafee1974bifurcation}, which is closely related to the Cahn-Hillard equation \cite{chen2004generation}.  The Chafee-Infante model has been studied extensively as a benchmark problem for nonlinear MOR techniques, and has been reduced by means of various methods, see, e.g.~\cite{morBenB15,morBenGG18}. 
The Chafee-Infante equation is a  diffusion-reaction model with the  governing equation, boundary and initial conditions as follows:
\begin{equation}\label{eq:ChafeeGovEq}
	\begin{aligned}
		\dfrac{dv}{dt} + v^3 &= \frac{\partial^2 v}{\partial x^2} +  u, \quad  &&(x,t) \in (0,1)\times (0,T], &\quad v(0,t) &= u(t), \quad &&t \in (0,T),\\
		\frac{\partial v}{\partial x}(1,t) &= 0, \quad  &&t \in (0,T), &\quad v(x,0) &= 0, \quad &&x \in (0,1).
	\end{aligned}
\end{equation}
As a quantity of interest, we observe the output at the right boundary, i.e., $y(t) = v(L,t)$. The model~\eqref{eq:ChafeeGovEq} contains a cubic nonlinear term, which fits our discussion of spatially local nonlinear terms of \Cref{sec:PDE}. Hence, we can use our proposed non-intrusive approach to construct a learned ROM. 

The governing equations are spatially discretized via a finite difference scheme using $500$ grid points. We generate snapshots of the discretized state $\dstate(t)$ via numerical simulations of the semi-discrete system in the time interval $T = \left[0,10\right]$s with time step $10^{-3}$.  We also collect snapshot data for the input $u(t) = 10(\sin(\pi t)+1)$. This allows us to build a snapshot matrix $\Mstate$ and input matrix $\bU$. Furthermore, we construct a nonlinear snapshot matrix $\bF$ from the snapshot matrix $\Mstate$ by applying the nonlinear function $\boldf$ from equation~\eqref{eq:def_f}, i.e., $\bF(:,i) = \boldf(\Mstate(:,i),t_i)$ where $(:,i)$ denotes the $i$th column of a matrix.

In \Cref{fig:chafee_decaySV}, we show the decay of the singular values of the snapshot matrix and nonlinear snapshot matrix, indicating exponential decay for both. By taking the dominant $12$  POD basis vectors, we construct an intrusive ROM by explicitly computing the projected matrices, and also employ our non-intrusive framework from \Cref{algo:OpInfNL} to obtain a learned ROM. 
For the operator inference method, we leave out the first $10$ recorded snapshots, as the transient response in the first 10 state vectors is very fast. A much smaller time step could solve this issue, but we are often faced with a situation where a data set with a fixed time step is given to us, and we have little control over the time step selection, which is the situation we consider here.
We employ DEIM approximation for the reduced nonlinear terms as discussed in \Cref{sec:DEIM}. We select DEIM basis vectors for the nonlinear term corresponding to relative singular values up to $10^{-8}$.

In \Cref{fig:chafee_response}, we compare the output of interest for the FOM and both ROMs for the same input. Note that the system is simulated for $T=20$s, thus predicting the output for 10s, i.e., 100\% past the training interval. We observe that the learned ROM is more accurate at most time steps. While this appears counter-intuitive at first, it might be possible that unresolved features (sometimes called closure terms) are learned by the ROM. This remains a topic of further investigation.

In \Cref{fig:chafee_stateOpComp}, we compare the state approximation error of both ROMs. Both POD and our learned ROM are accurate, with the learned ROM being slightly more accurate. We found that after $r = 14$, the non-intrusive approach starts yielding unstable ROMs. This could be due to the fact that the singular values corresponding to higher basis vectors are relatively small (order $10^{-10}$ in this example). Moreover, since the training data remains the same as $r$ is increased, there can be a situation where there is not enough information in the training data to learn a more accurate ROM that inherits the stability from the FOM. We do not observe instability in non-intrusive ROMs in general.
Nevertheless, at this point the ROMs already predict the output with accuracy $10^{-7}$, which is sufficient in most practical applications. 

\begin{figure}[!tb]
	\centering
	\setlength\fheight{3.0cm}
	\setlength\fwidth{.4\textwidth}
	{\footnotesize
	\tikzsetnextfilename{Chafee_DecaySV}%
	\input{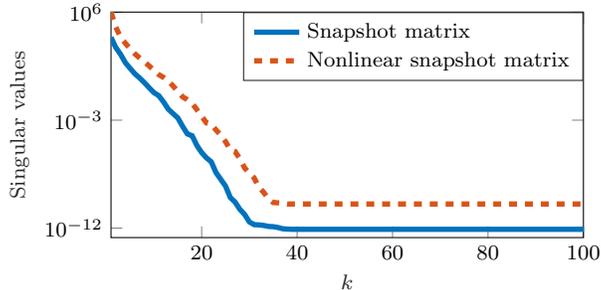}%

	}
	\caption{Chafee-Infante example: Decay of singular values of the snapshot matrix and the nonlinear snapshot matrix.}
	\label{fig:chafee_decaySV}
\end{figure}

\begin{figure}[!tb]
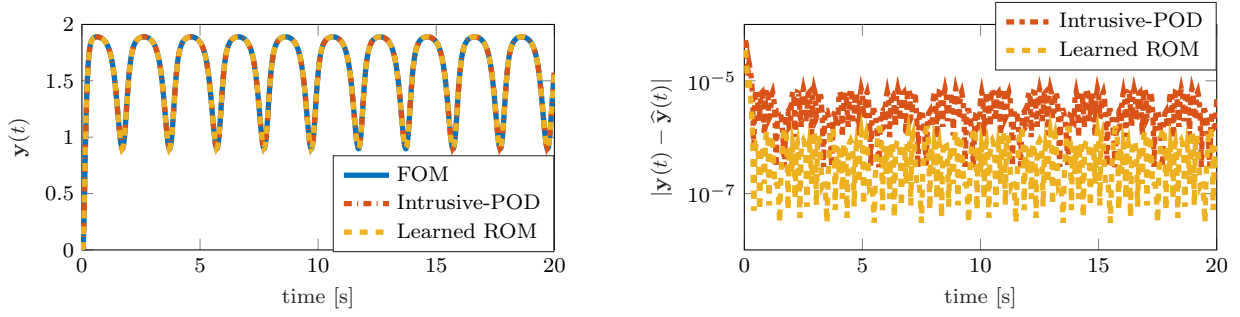

	\centering
	\setlength\fheight{3.0cm}
	\setlength\fwidth{.4\textwidth}
	{\footnotesize
	\tikzsetnextfilename{Chafee_Response}%
	\input{Chafee_Response.tikz}%
\hfill
	\tikzsetnextfilename{Chafee_Error}%
	\input{Chafee_Error.tikz}%

	}
	\caption{Chafee-Infante example: Output (left) and output errors (right) of the POD intrusive and the learned ROMs of order $r = 12$.}
	\label{fig:chafee_response}
\end{figure}

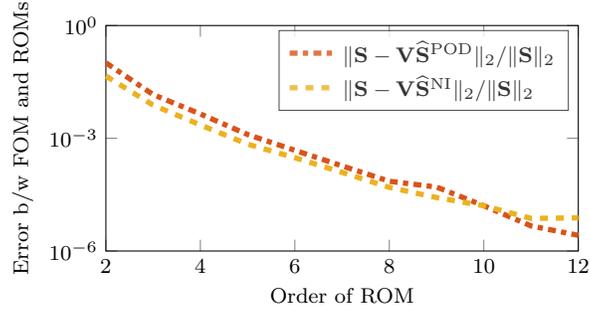
\begin{figure}[!tb]
	\centering
	\setlength\fheight{3.0cm}
	\setlength\fwidth{.4\textwidth}
	{\footnotesize
	\tikzsetnextfilename{Chafee_DiffFullState}%
%
%
\definecolor{mycolor1}{rgb}{0.00000,0.44700,0.74100}%
\definecolor{mycolor2}{rgb}{0.85000,0.32500,0.09800}%
\definecolor{mycolor3}{rgb}{0.92900,0.69400,0.12500}%
\begin{tikzpicture}

\begin{axis}[%
width=0.951\fwidth,
height=\fheight,
at={(0\fwidth,0\fheight)},
scale only axis,
xmin=2,
xmax=12,
xlabel={Order of ROM},
ymode=log,
ymin=1e-06,
ymax=1,
yminorticks=true,
ylabel={Error b$/$w FOM and ROMs},
axis background/.style={fill=white},
legend style={legend cell align=left, align=left, draw=white!15!black,opacity = 0.8}
]
\addplot [color=mycolor2, line width=2.0pt,dashdotted]
  table[row sep=crcr]{%
2	0.102072678589675\\
3	0.0145292316544612\\
4	0.00443735776634642\\
5	0.00125838764567452\\
6	0.000478253472671275\\
7	0.000183921567935423\\
8	7.17515523014688e-05\\
9	5.03597261119824e-05\\
10	1.60248407031151e-05\\
11	4.6515176296464e-06\\
12	2.6155980340248e-06\\
};
\addlegendentry{$\|\bS-\bV\hat\bS^{\text{POD}}\|_2/\|\bS\|_2$}

\addplot [color=mycolor3, dashed, line width=2.0pt]
  table[row sep=crcr]{%
2	0.0453308427594091\\
3	0.00761852689744839\\
4	0.00226650294625967\\
5	0.000696320983792447\\
6	0.000304051106959241\\
7	0.000123690612626223\\
8	4.93874952823831e-05\\
9	2.64365774377674e-05\\
10	1.60226481490839e-05\\
11	7.37607508213918e-06\\
12	7.61896581350641e-06\\
};
\addlegendentry{$\|\bS-\bV\hat\bS^{\text{NI}}\|_2/\|\bS\|_2$}

\end{axis}
\end{tikzpicture}

		\caption{Chafee-Infante example: Convergence of relative state errors with increasing order of the POD intrusive ROMs and the learned ROMs.}
		\label{fig:chafee_stateOpComp}
	}
\end{figure}

\subsection{Tubular reactor model} \label{sec:numerics_tubular}
We consider a one-dimensional non-adiabatic tubular reactor model with a single reaction, describing the evolution of the species concentration $\psi(x,t)$ and temperature $\theta(x,t)$. The dynamics of the model are given by the PDEs~\cite{heinemann1981tubular} as follows:
\begin{subequations}
	\begin{align*}
		\frac{\partial \psi}{\partial t}   &= \dfrac{1}{\Pe} \frac{\partial^2 \psi}{\partial x^2} - \frac{\partial \psi}{\partial x} - \mathcal D \mathcal{F}(\psi,\theta;\gamma),\\
		\frac{\partial \theta}{\partial t } &= \dfrac{1}{\Pe}\frac{\partial^2 \theta}{\partial x^2} - \frac{\partial \theta}{\partial x}  -\beta(\theta-\theta_{\text{ref}}) + \mathcal B\mathcal D \mathcal{F}(\psi,\theta;\gamma),
	\end{align*}
\end{subequations} 
with spatial variable $x\in (0,1)$, time $t >0$ and Arrhenius reaction term 
\begin{equation} \label{eq:F_tubular}
	\mathcal{F}(\psi,\theta;\gamma) = \psi \exp  \left( \gamma - \dfrac{\gamma}{\theta} \right). 
\end{equation}
The parameters are the Damk{\"o}hler number $\cD$, the P{\'e}clet number  $\Pe$, and the reaction rate $\gamma$. We set $\mathcal{D}=0.167$, $ \Pe=5$, and  $\gamma=25$. The reference temperature is $\theta_{\text{ref}}(x,t) \equiv 1$,  and the constants are $\mathcal{B}=0.5$ and  $\beta=2.5$. Robin boundary conditions are imposed on the left boundary of the domain as follows:
\begin{equation*}
\frac{\partial \psi}{\partial x}(0,t) = \Pe (\psi(0,t)-1), \qquad \frac{\partial \theta}{\partial x}(0,t) = \Pe (\theta(0,t)-1), 
\end{equation*}
and the Neumann boundary conditions on the right boundary are
\begin{equation*}
\frac{\partial \psi}{\partial x}(1,t) = 0, \quad \frac{\partial \theta}{\partial x} (1,t) =0.
\end{equation*}
Moreover, the initial conditions are prescribed as
\begin{equation*}
	\psi(x,0) = \psi_0(x), \qquad \theta(x,0) = \theta_0(x).
\end{equation*}	
The quantity of interest is the temperature oscillation at the reactor exit, i.e., the quantity
\begin{equation*}
	y(t) = \theta(x=1,t).
\end{equation*}
The governing PDE is discretized with a finite difference approximation (see \cite{zhou2012thesis} for details) leading to a discretized state $\dstate(t) \in \mathbb{R}^{198}$ which takes the form
\begin{align}\label{eq:fullDiscretedModel}
	\dot \dstate(t) & = \bA \dstate(t) + \boldf(\dstate(t)) +  \bB.
\end{align}
Observe that in this example the input $\bu(t)$ is constant, i.e., $\bu(t) \equiv 1$. 
The Arrhenius nonlinear term~\eqref{eq:F_tubular} requires pointwise evaluations (local in space), which fits to our framework presented  in \Cref{sec:use_nonlin}. Thus, we can apply the proposed non-intrusive operator inference framework to obtain a ROM by including PDE-level information about the nonlinear terms into the learning framework. We build the snapshot matrix by collecting snapshots in the time interval $T  = \left(0,30\right]$s which are $\delta t = 10^{-3}$ apart. 

\Cref{fig:TR_decaySV} shows the decay of the singular values of the snapshot matrix $\bS$ and the nonlinear snapshot matrix $\bF$. 
\begin{figure}[!tb]
	{	\centering
		\setlength\fheight{3.0cm}
		\setlength\fwidth{.4\textwidth}
	\tikzsetnextfilename{decay_singularValues}%
	\input{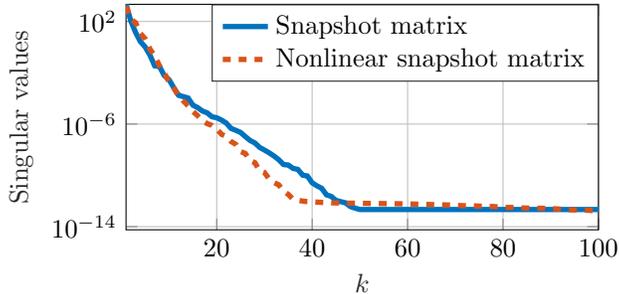}%

		\caption{Tubular reactor example: Decay of the singular values of the snapshot matrix  and the nonlinear snapshot matrix.}
		\label{fig:TR_decaySV}
	}
\end{figure}
We use the leading $r=10$ POD modes in the projection matrix $\bV$. We construct ROMs both via explicitly computing the projected model terms, and via our non-intrusive approach. As in the previous example, we make use of DEIM to efficiently evaluate the reduced nonlinear terms in both ROMs. We select DEIM basis vectors for the nonlinear term corresponding to relative singular values up to $10^{-8}$.

Both ROMs are  simulated until $T = 60$s, which is 100\% longer than the training interval.   \Cref{fig:tubularresponse_a} shows the quantity of interest obtained from the FOM and the two ROMs. Note, that for this set of parameters, the model enters a self-excited limit-cycle oscillation. \Cref{fig:tubularresponse_b} shows the output error $| \by(t)- \hat\by(t) |$ for the two ROMs. Both models produce similar output errors. This competitiveness of the learned ROM with the projection-based POD model is remarkable, since after $t=30$s outputs are purely predictive as the ROMs exit the range of training data.  

\Cref{fig:stateOpComp} shows the relative error in the approximated states for both ROMs with increasing ROM order. The learned ROM is more accurate than the POD-ROM in most cases. The errors for both models level-off after $r = 18$ at a relative state error of $10^{-7}$, which is again accurate enough for most applications, certainly for the temperature field considered in this example.  

\begin{figure}[!tb]
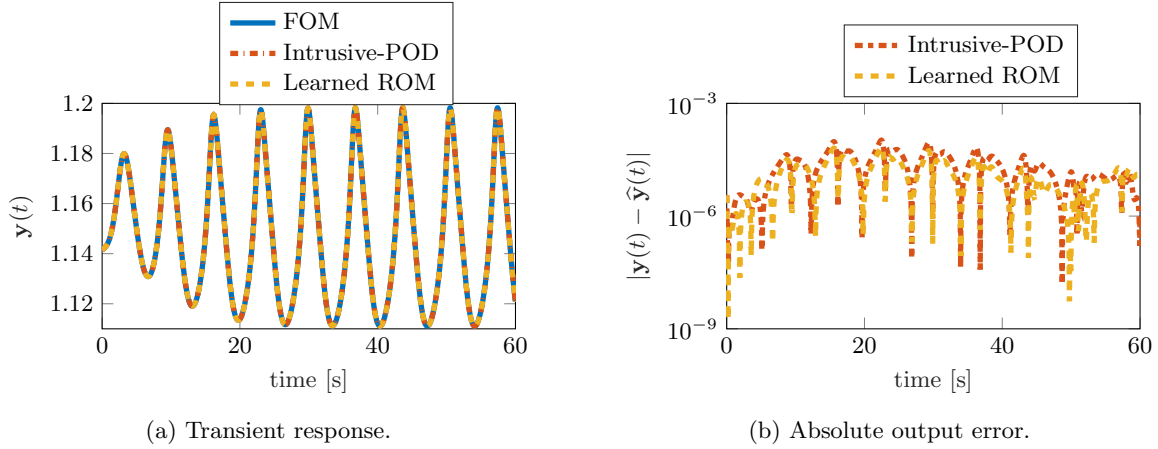

	{\small
		\centering
		\begin{subfigure}[t]{0.5\textwidth}
			\centering
			\setlength\fheight{3.0cm}
			\setlength\fwidth{.7\textwidth}
	\tikzsetnextfilename{outputs}%
	\input{outputs.tikz}%

			\caption{Transient response.}
			\label{fig:tubularresponse_a}
		\end{subfigure}%
		\begin{subfigure}[t]{0.5\textwidth}
			\centering
			\setlength\fheight{3.0cm}
			\setlength\fwidth{.7\textwidth}
	\tikzsetnextfilename{output_error}%
	\input{output_error.tikz}%

			\caption{Absolute output error.}
			\label{fig:tubularresponse_b}
		\end{subfigure}
	}
	\caption{Tubular reactor example: A comparison of the transient behavior of the projection-based POD ROM and the learned ROM.}
	\label{fig:tubularresponse}
\end{figure}

\begin{figure}[!tb]
	{\footnotesize
		\centering
		\setlength\fheight{3.0cm}
		\setlength\fwidth{.4\textwidth}
	\tikzsetnextfilename{state_error_convergence}%
%
%
\definecolor{mycolor1}{rgb}{0.00000,0.44700,0.74100}%
\definecolor{mycolor2}{rgb}{0.85000,0.32500,0.09800}%
\definecolor{mycolor3}{rgb}{0.92900,0.69400,0.12500}%
\begin{tikzpicture}

\begin{axis}[%
width=0.973\fwidth,
height=\fheight,
at={(0\fwidth,0\fheight)},
scale only axis,
xmin=4,
xmax=20,
xlabel style={font=\color{white!15!black}},
xlabel={reduced model dimension},
ymode=log,
ymin=1.4203677216366e-07,
ymax=0.0491884373834765,
yminorticks=true,
ylabel style={font=\color{white!15!black}},
ylabel={Error b/w FOM and ROM},
axis background/.style={fill=white},
legend style={legend cell align=left, align=left, draw=white!15!black}
]
\addplot [color=mycolor2,  line width=2.0pt,dashdotted]
  table[row sep=crcr]{%
4	0.0491884373834765\\
6	0.00483437602869119\\
8	0.000787590194736884\\
10	6.4146946927848e-05\\
12	3.10069060137348e-05\\
14	1.90219938301552e-06\\
16	1.4160993524883e-06\\
18	2.51694444132713e-07\\
20	2.22579164884324e-07\\
};
\addlegendentry{$\|\bS-\bV\hat\bS^{\text{POD}}\|_2/\|\bS\|_2$}

\addplot [color=mycolor3, dashed, line width=2.0pt]
  table[row sep=crcr]{%
4	0.0415090415699991\\
6	0.00567360888756136\\
8	0.000430473900016045\\
10	3.9907976906436e-05\\
12	9.46690926672403e-07\\
14	7.10848405376726e-07\\
16	1.4203677216366e-07\\
18	2.2875232079919e-07\\
20	2.20542048997309e-07\\
};
\addlegendentry{$\|\bS-\bV\hat\bS^{\text{NI}}\|_2/\|\bS\|_2$}

\end{axis}
\end{tikzpicture}

		\caption{Tubular reactor example: Relative state errors produced by both ROMs, i.e., $\|\Mstate -\bV \hat{\Mstate}^{\text{POD}}\|/ \| \Mstate \|$ and $\|\Mstate -\bV \hat{\Mstate}^{\text{NI}}\|/ \| \Mstate \|$.}
		\label{fig:stateOpComp}
	}
\end{figure}

\subsection{Batch Chromatography} \label{sec:numerics_batchChrom}

In \Cref{sec:BC_setup} we describe the PDE model and its discretization. \Cref{sec:BC_intrusive} presents the structure-preserving ROM generation in the projection-based setting, and \Cref{sec:BC_nonintrusive} describes how we enforce the coupling structure in the non-intrusive learning framework. \Cref{sec:BC_results} presents our numerical results.

\subsubsection{Problem setup} \label{sec:BC_setup}
Chromatography is a separation and purification process used in the chemical and pharmaceutical industry, and we consider one of the simplest chromatography processes, the so-called batch chromatography.  For a detailed description of the process, we refer to \cite{guiochon2006fundamentals,morZhaFLetal14}. The primary principle of the batch chromatography process for binary separation is shown in~\Cref{fig:schematic_bc}. A mixture of products A and B is injected at the inlet of the column and then the feed mixture flows through the column. Since the to-be-separated solutes exhibit different adsorption affinities to the stationary phase, they move at different velocities, and thus separate from each other when exiting the column. At the column outlet, component A, which moves faster, is collected between $t_1$ and $t_2$, and later, component B is collected between $t_3$ and $t_4$. Here, time $t_1$ and $t_4$ are determined by a minimum concentration threshold that the detector can resolve, and time $t_2$ and $t_3$ are determined by the purity specifications imposed on the products. 
\begin{figure}[!tb]
	\centering
	\includegraphics[width = 9cm]{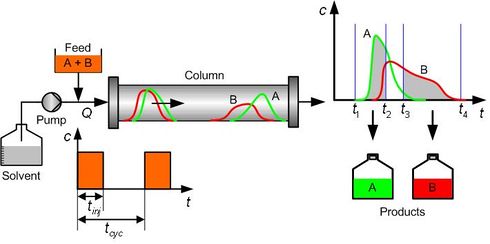}
	\caption{The figure shows a chromatography process, separating a mixture, c.f.~\cite{morwiki_bc}.}
	\label{fig:schematic_bc}
\end{figure}

The dynamics of the batch chromatographic column can be described by the following dimensionless coupled PDE system:
\begin{subequations}\label{eq:BC_pde}
	\begin{align}
		&\dfrac{\partial c_i}{\partial t} +  \dfrac{1-\epsilon}{\epsilon}\dfrac{\partial q_i}{\partial t} + \dfrac{\partial c_i}{\partial x} - \dfrac{1}{\Pe}\dfrac{\partial^2 c_i}{\partial x^2} = 0, \label{eq:BC_pde_c} \\ 
		&\dfrac{\partial q_i}{\partial t} =\dfrac{L}{Q/\epsilon A_c} \kappa_i \left(q_i^{Eq} - q_i\right),\label{eq:BC_pde_q}
	\end{align}
\end{subequations}
where $i \in \{1,2\}$, $x\in (0,L)$ is the spatial variable, and the liquid and solid phase concentrations of the compound $i$ are denoted by $c_i$ and $q_i$, respectively; the constants $\epsilon$, $L$ are the interstitial liquid velocity, the column porosity, and the column length, respectively; $\Pe$ denotes the P\'eclet number; $A_c$ denotes the cross-sectional area, defined as $\tfrac{\pi D^2}{4}$, where $D$ is the diameter of the tube. The constant $\kappa_{i}$ is the mass-transfer coefficient of component~$i$; moreover, $q^{\text{Eq}}_i$ is the adsorption equilibrium concentration calculated by the isotherm equation for the component~$i$, given as follows:
\begin{equation} \label{eq:BC_qi}
	q^{\text{Eq}}_i=\frac{H_{i,1}\,c_i}{1+\sum\limits_{j=1,2}K_{j,1}c_j^f\,c_j}+\frac{H_{i,2}\,c_i}{1+\sum\limits_{j=1,2}K_{j,2}c_j^f\,c_j},  
\end{equation}
where $H_{i,1}$ and $H_{i,2}$ are the Henry constants, and $K_{j,1}$ and $K_{j,2}$ the thermodynamic coefficients.  Furthermore, the boundary conditions are specified by the Danckwerts relations:
\begin{equation}
	\left.\frac{\partial c_i}{\partial x}\right|_{x=0} = \Pe\,(\left.c_i\right|_{x=0}-c^{\text{in}}_i), \quad\quad \left.\frac{\partial c_i}{\partial x}\right|_{x=L}=0,  
\end{equation}
where $c^{\text{in}}_i$ is the concentration of component $i$ at the inlet of the column.  A sigmoid-type injection profile is assumed, which is a differentiable approximation of a rectangular profile, given by
\begin{equation}
	c^{\text{in}}_i(t)= \dfrac{1}{1 + e^{-5(t-t_{\text{inj}  } )  }  },
\end{equation}
where $t_{\text{inj}}$ is the injection period. In addition, we have the following initial conditions:
\begin{equation}
	c_i(t=0,x)=q_i(t=0,x)=0,\quad x\in[0,\;L],\;i \in \{1,2\}.
\end{equation}

System \eqref{eq:BC_pde} is a coupled set of PDEs, and more importantly, \eqref{eq:BC_pde_c} involves the derivatives of $c_i$ and $q_i$. However, we can modify \eqref{eq:BC_pde_c} such that the resulting equation does not have a  $\partial q_i/\partial t$ term. For this, we set $\epsilon_c := \dfrac{\epsilon-1}{\epsilon}$ and insert \eqref{eq:BC_pde_q} by into \eqref{eq:BC_pde_c}, yielding
\begin{subequations}\label{eq:BC_pde1}
	\begin{align}
		\dfrac{\partial c_i}{\partial t} + \dfrac{\partial c_i}{\partial x} - \dfrac{1}{\Pe}\dfrac{\partial^2 c_i}{\partial x^2} & = \epsilon_c\kappa_i \left(q_i^{Eq} - q_i\right),\label{eq:BC_pde1c}\\
		\dfrac{\partial q_i}{\partial t} & = \kappa_i \left(q_i^{\text{Eq}} - q_i\right).\label{eq:BC_pde1q}
	\end{align}
\end{subequations}
A finite volume discretization of the governing equations yields a discretized model of the form:
\begin{equation}\label{eq:discretizedSys}
	\bbm \dot \bc_1 \\ \dot \bq_1 \\ \dot \bc_2 \\ \dot \bq_2 \ebm  = \bbm \bA_1 & 0 & 0 & 0 \\ 0 & 0 & 0 & 0 \\ 0 & 0 & \bA_2  & 0 \\ 0 & 0 & 0 & 0 \ebm \bbm \bc_1 \\ \bq_1 \\ \bc_2 \\ \bq_2 \ebm +  \bbm \bB \\ 0 \\ \bB \\ 0 \ebm u(t) + \bbm \epsilon_c \\ 1 \ebm \otimes \bbm \bff_1(\bc_1,\bq_1,\bc_2,\bq_2) \\ \bff_2(\bc_1,\bq_1,\bc_2,\bq_2) \ebm,
\end{equation}
where $\bc_1,\bq_1,\bc_2,\bq_2 \in \Rn$, $\bA_1,\bA_2\in \Rnn$, $\bB\in \Rn$, and $n$ is the number of degrees of freedom in the discretized field. The term $\bff_i(\bc_1,\bq_1,\bc_2,\bq_2)\in \Rn$ is the evaluation of the nonlinear term $\kappa_i \left(q_i^{\text{Eq}} - q_i\right)$ on the right-hand side of \eqref{eq:BC_pde1q} on the spatial grid. Note that this nonlinear term involves a rational function, see \eqref{eq:BC_qi}, which again translates to a nonlinear term that is local in space, and hence the nonlinear function fits the setting described in \Cref{sec:PDE}.

\subsubsection{Maintaining the coupling structure: Projection-based ROM} \label{sec:BC_intrusive}
Classically, to obtain the basis for POD, one would stack all variables in one vector, i.e., $[\bc_1^\top, \bq_1^\top,\bc_2^\top,\bq_2^\top]^\top$ and determine the common subspace $\bV$ by taking the SVD of that data. However, by doing so, we observe that both methods produce unstable ROMs. This is potentially due to the fact that the governing PDEs are highly coupled and the resulting ROMs do not preserve the coupling topology structure. We present an approach that uses the knowledge about the topology structure of a coupled system in the ROM. 

To preserve the coupled topology structure, we first compute a basis for each subsystem, i.e., $\bc_i \approx \bV_{\bc_i} \hat \bc_i$ and $\bq_i \approx \bV_{\bq_i} \hat \bq_i$, $i \in \{1,2\}$.  Next, we construct a projection matrix with a block structure, i.e., we approximate
\begin{equation} \label{eq:BC_V}
	\begin{bmatrix} \bc_1\\ \bq_1\\\bc_2\\\bq_2\end{bmatrix} \approx \bbm \bV_{\bc_1}  & & & \\ & \bV_{\bq_1} & & \\ & & \bV_{\bc_2} & \\ & & & \bV_{\bq_2} \ebm \bbm \hat{\bc}_{1} \\ \hat{\bq}_{1}\\\hat{\bc}_{2}\\\hat{\bq}_{2}\ebm.
\end{equation}
With the above projection matrix $\bV$, a projection-based ROM preserves this structure, i.e., 
\begin{equation}\label{eq:discretizedSysROM}
	\bbm \dot{\hat{\bc}}_1 \\ \dot{\hat{\bq}}_1 \\ \dot{\hat{\bc}}_2 \\ \dot{\hat{\bq}}_2 \ebm  = \bbm \hat{\bA}_1 & 0 & 0 & 0 \\ 0 & 0 & 0 & 0 \\ 0 & 0 & \hat{\bA}_2  & 0 \\ 0 & 0 & 0 & 0 \ebm \bbm \hat{\bc}_1 \\ \hat{\bq}_1 \\ \hat{\bc}_2 \\ \hat{\bq}_2 \ebm +  \bbm \hat{\bB}_1 \\ 0 \\ \hat{\bB}_2 \\ 0 \ebm u(t) + \bbm \epsilon_c \\ 1 \ebm \otimes \bbm \hat{\bff}_1(\hat{\bc}_1,\hat{\bq}_1,\hat{\bc}_2,\hat{\bq}_2) \\ \hat{\bff}_2(\hat{\bc}_1,\hat{\bq}_1,\hat{\bc}_2,\hat{\bq}_2) \ebm,
\end{equation}
where $\hat{\bc}_1,\hat{\bq}_1,\hat{\bc}_2,\hat{\bq}_2 \in \Rr$, $\hat{\bA}_1,\hat{\bA}_2\in \Rrr$, $\hat{\bB}_1,\hat{\bB}_2 \in \Rr$.
Note, that the dimension of the coupled ROM~\eqref{eq:discretizedSysROM} is $4r$, however, it has sparse structure, e.g., the linear system matrix has only $2r^2$ nonzero entries and the linear input matrix has only $2r$ nonzero entries. Moreover, it is also possible to use different ROM dimensions for each of the four physical states. Thus, enforcing the coupling structure does not necessarily lead to drastically more expensive ROMs.

\begin{remark} \label{rem:Stable}
	Maintaining the coupling structure in a ROM  of the  aggregate system is important for physical interpretability, and can have numerical implications as well~\cite{morBenF15,liao2007important,morReiS07,reis2008survey}. 
	For instance, structure-preserving ROMs can  lead to convergent feedback controllers for coupled systems~\cite{kramer16MORcontrolBurgers} and have implications on stability.
	We observe in our numerical experiments on the batch chromatography example, that unstable ROMs arise when we do not preserve this structure. Conversely, when the coupling structure of the FOM is maintained in the ROMs, we obtain stable ROMs. A theoretical justification of this observation for nonlinear systems remains an open problem, though, for linear systems, stability analysis for coupled systems is performed in \cite{morReiS07}. 
\end{remark}

\subsubsection{Maintaining the coupling structure: Learned ROM} \label{sec:BC_nonintrusive}
Having derived the form of the projection-based structure-preserving ROM leads to a strategy to enforce the coupling structure in the operator inference framework as follows. First, we compute the block-diagonal projection matrix $\bV$ from~\eqref{eq:BC_V}. Second, we collect projected state and time-derivative data:
\begin{equation}
	\hat{\Mstate} 
	= \begin{bmatrix} \bV_{\bc_1}^\top \bC_1  \\ \bV_{\bq_1}^\top \bQ_1 \\\bV_{\bc_2}^\top \bC_2 \\ \bV_{\bq_2}^\top \bQ_2 \end{bmatrix} 
	=: \begin{bmatrix} \hat{\bC}_1 \\ \hat{\bQ}_1 \\ \hat{\bC}_2 \\ \hat{\bQ}_2\end{bmatrix},
	\qquad 
	\dot{\hat{\Mstate}}
	= \begin{bmatrix} \bV_{\bc_1}^\top \dot\bC_1  \\ \bV_{\bq_1}^\top \dot\bQ_1 \\ \bV_{\bc_2}^\top \dot\bC_2 \\ \bV_{\bq_2}^\top \dot\bQ_2 \end{bmatrix} 
	=: \begin{bmatrix} \dot{\hat{\bC}}_1 \\ \dot{\hat{\bQ}}_1 \\ \dot{\hat{\bC}}_2 \\ \dot{\hat{\bQ}}_2\end{bmatrix}.
\end{equation}
The input matrix $\bU$ is assembled as given in~\eqref{eq:Xdot_X} and the nonlinear term matrix $\bF$ is built in a similar way as given in \eqref{eq:F}. We then solve \textit{separate} least-squares problems of the form 
\begin{equation}\label{eq:separateLS}
	\min_{\hat \bA_{i},\hat{\bB_i}}  \ \| \dot{\hat\bC}_i -\epsilon_c\bV_{\bc_i}^\top \bF - \hat \bA_{i} \hat{\bC}_i - \hat \bB_{i} \bU \|_F,\quad i \in \{1,2\}.
\end{equation}
Recall the structure of the projection-based ROM in \eqref{eq:discretizedSysROM}. For $\dot{\hat\bq}_i$, $i \in \{1,2\}$, there are no linear terms on the right hand side, and so there is no need to solve a least-squares problem. The expressions for $\dot{\hat\bq}_i$ are
\begin{align*}
	\dot{\hat\bq}_i = \boldf_i(\hat\bc_1,\hat\bq_1,\hat\bc_2,\hat\bq_2) =  \bV^\top_{\bq_i}\boldf_i(\bV_{\bc_1}\bc_1,\bV_{\bq_1}\bq_1,\bV_{\bc_2}\bc_2,\bV_{\bq_2}\bq_2).
\end{align*}

\subsubsection{Results} \label{sec:BC_results}

%

For the numerical experiment, we choose the model parameters as follows: $\epsilon=0.4$, $L=10.5$, $\Pe =2000$, $D = 2.6$, $\kappa_{1}=\kappa_{2}=0.1$, $H_{1,1} = 3.728,  H_{1,2} = 0.3, H_{2,1}= 2.688,  H_{2,2} = 0.1$, $K_{1,1} = 46.6,  K_{1,2} = 33.6, K_{2,1}= 3000,  K_{2,2} =1000$, $Q=0.1018$, $c_1^f = c_2^f =  2.9\times 10^{-3}$, and $t_\text{inj}=1.3$s. For spatial discretization we choose $n= 400$ (thus the overall state is $4n=1600$). 
The model is simulated until $T=10$s and snapshots are collected every $\delta t = 5\times 10^{-5}$s.
Such a fine time stepping allows us to obtain an accurate approximation of the time derivative; however, we take every $100$th snapshot of the state and time derivative while inferring the operators in  the least squares procedure.
\Cref{fig:BC_decaySV} shows the decay of the singular values of each component, showing a rather slow decay, which can be expected due to the transport nature of the problem. Next, we compute ROMs via intrusive POD and our non-intrusive approach with the same matrix $\bV$ as shown in~\eqref{eq:BC_V}, where each component is reduced to order $r = 22$. As in the previous two examples, we incorporate a DEIM approximation of the reduced nonlinear terms for both ROMs. We select DEIM basis vectors for the nonlinear term corresponding to relative singular values up to $10^{-10}$.
\Cref{fig:BC_response}, left, shows the quantity of interest obtained from the time-domain simulations of the FOM and the two ROMs.  \Cref{fig:BC_response}, right, shows the (mean) relative error of the two outputs. The results indicate that the non-intrusive and intrusive approaches yield ROMs of comparative accuracy. 
\Cref{fig:BC_stateOpComp} shows the relative error in the approximated states for both ROMs with increasing ROM order. For ROM orders of $r<30$, both the intrusive POD ROM and the learned ROMs perform similarly. However, as $r$ increases further, the error for the non-intrusive models does not decrease as favorably as for POD intrusive models. This could be due to the condition number of the operator inference problem \eqref{eq:ROM_OPINF}. Adding regularization to the least-squares problem could improve this as seen in~\cite{SKHW2020_learning_ROMs_combustor} and also using data from several different simulations, similar to \cite[Sec 3.6]{Peherstorfer16DataDriven}.

\begin{figure}[!tb]
	{	\centering
		\setlength\fheight{3.0cm}
		\setlength\fwidth{.4\textwidth}
	\tikzsetnextfilename{BC_DecaySV}%
	\input{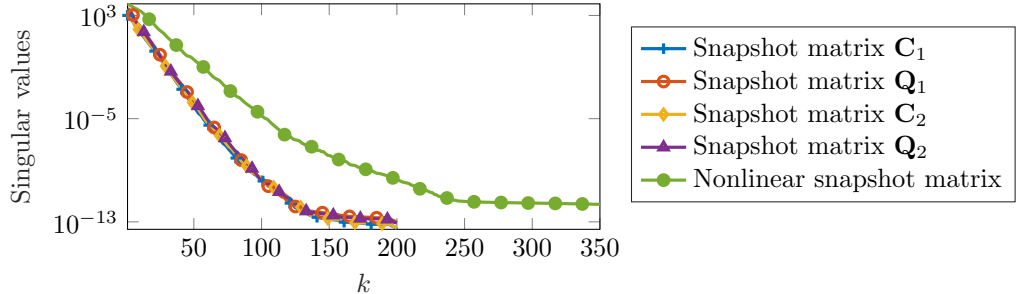}%

		\caption{Batch chromatography example: Decay of singular values of the collected snapshots for all four components separately and the nonlinear snapshot matrix.}
		\label{fig:BC_decaySV}
	}
\end{figure}

\begin{figure}[!tb]
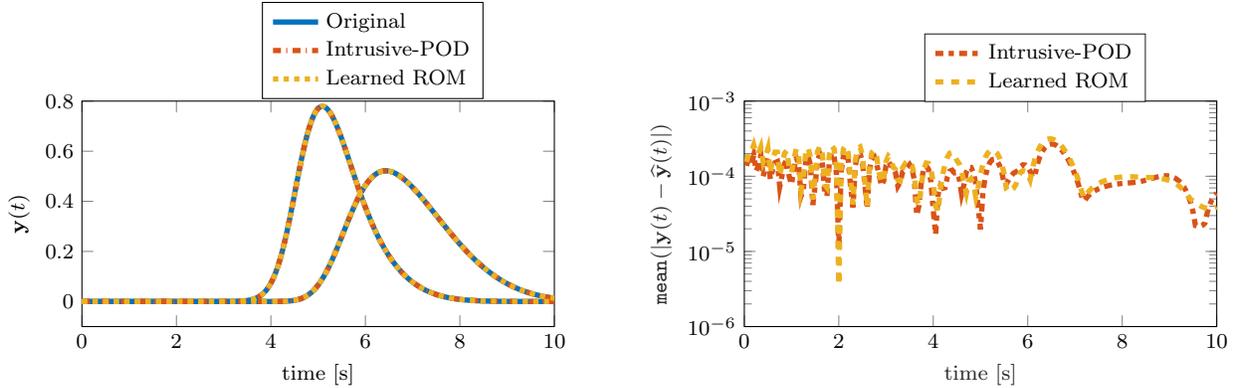

	{\footnotesize
		\centering
		\setlength\fheight{3.0cm}
		\setlength\fwidth{.4\textwidth}
	\tikzsetnextfilename{BC_Response}%
	\input{BC_Response.tikz}%
\hfill
	\tikzsetnextfilename{BC_Error}%
	\input{BC_Error.tikz}%

	}
	\caption{Batch chromatography example: A comparison of the POD intrusive model with the learned model of order $r = 4\times 22$.}
	\label{fig:BC_response}
	
\end{figure}

\begin{figure}[!tb]
	\centering
	\setlength\fheight{3.0cm}
	\setlength\fwidth{.4\textwidth}
	{\footnotesize
	\tikzsetnextfilename{BC_DiffFullState_approxResidual}%
%
%
\definecolor{mycolor1}{rgb}{0.00000,0.44700,0.74100}%
\definecolor{mycolor2}{rgb}{0.85000,0.32500,0.09800}%
\definecolor{mycolor3}{rgb}{0.92900,0.69400,0.12500}%
\begin{tikzpicture}

\begin{axis}[%
width=0.973\fwidth,
height=\fheight,
at={(0\fwidth,0\fheight)},
scale only axis,
xmin=20,
xmax=50,
xlabel style={font=\color{white!15!black}},
xlabel={reduced model dimension},
ymode=log,
ymin=2.30461281735256e-07,
ymax=0.00161216349900744,
yminorticks=true,
ylabel style={font=\color{white!15!black}},
ylabel={Error b/w FOM and ROM},
axis background/.style={fill=white},
legend style={legend cell align=left, align=left, draw=white!15!black,at = {(1,1.15), anchor = north east }}
]
\addplot [color=mycolor2, dashdotted, line width=2.0pt]
  table[row sep=crcr]{%
20	0.00161216349900744\\
22	0.000682993138380743\\
24	0.00034531935475376\\
26	0.000171483590582855\\
28	8.15863003277372e-05\\
30	4.1656043599392e-05\\
32	2.18925831453569e-05\\
34	1.14629612386843e-05\\
36	1.61571373468581e-05\\
38	9.88563677757842e-06\\
40	9.92354859464153e-06\\
42	1.12898283668336e-06\\
44	1.0429095204538e-06\\
46	2.90217171688026e-07\\
48	2.35767328008552e-07\\
50	2.30461281735256e-07\\
};
\addlegendentry{$\|\bS-\bV\hat\bS^{\text{POD}}\|_2/\|\bS\|_2$}

\addplot [color=mycolor3, dashed, line width=2.0pt]
  table[row sep=crcr]{%
20	0.00128705430574921\\
22	0.000652998495733864\\
24	0.000329971950106206\\
26	0.00016434924914329\\
28	0.000118239683361719\\
30	4.10835399915565e-05\\
32	0.000163735171822294\\
34	1.10411684749263e-05\\
36	6.43789794284248e-05\\
38	1.01953832516015e-05\\
40	0.000197632852065158\\
42	1.54090591900028e-06\\
44	1.15211188433786e-06\\
46	7.06692021944402e-07\\
48	2.45620426506554e-07\\
50	2.33678408260174e-07\\
};
\addlegendentry{$\|\bS-\bV\hat\bS^{\text{NI}}\|_2/\|\bS\|_2$}

\end{axis}
\end{tikzpicture}

	}
	\caption{Batch Chromatography example: Relative errors $\|\Mstate -\bV \hat{\Mstate}^{\text{POD}}\|/ \| \Mstate \| $ and $\|\Mstate - \bV\hat{\Mstate}^{\text{NI}}\| / \| \Mstate \|$.}
	\label{fig:BC_stateOpComp}
\end{figure}

\section{Conclusions} \label{sec:conclusion}
We have presented a data-driven model reduction method that non-intrusively learns reduced models from data collected of the full model states. The presented method applies to non-polynomial nonlinear models that are spatially local, and extends previous results for the polynomial case~\cite{Peherstorfer16DataDriven}. The method exploits the spatially local structure of the nonlinear terms to generate nonlinear snapshots from state data without requiring access to the discretization of the nonlinear term explicitly. Moreover, we presented a convergence result that shows that under mild assumptions on the time stepping scheme, and if the time step of the sampled data is sufficiently small, the non-intrusively learned reduced models converge to the same reduced models as obtained with intrusive model reduction methods. The numerical results show that the learned reduced models provide accurate predictions when compared to the full-model predictions. The three numerical experiments with reactive flows and chemical purification processes demonstrate that the method works well for strongly nonlinear systems, and further information in form of known structure can also be embedded into the learning framework. 
Future research directions motivated by this work are: addressing the effect of measurement noise (from real data) on the learning procedure and ultimately the learned ROMs; developing a suitable regularization strategy (possibly problem dependent) that could improve the learned ROMs accuracy and stability; and including additional information, such as physical constraints in the least-squares problem as suggested by \cite{loiseau_brunton_2018}.

\section*{Acknowledgments}
This work was supported in part by the US Air Force Center of Excellence on Multi-Fidelity Modeling of Rocket Combustor Dynamics award FA9550-17-1-0195, the Air Force Office of Scientific Research MURI on managing multiple information sources of multi-physics systems awards FA9550-15-1-0038 and FA9550-18-1-0023, and the AEOLUS center under US Department of Energy Applied Mathematics MMICC award  DE-SC0019303. The fourth author was partially supported by the US Department of Energy, Office of Advanced Scientific Computing Research, Applied Mathematics Program, DOE Award DESC0019334 and the National Science Foundation under Grant No. 1901091.

\bibliographystyle{elsarticle-num}
\bibliography{bibliography_OpInf_localized.bib,mor.bib}

\end{document}